\def\jname{Preprint}
\newif\ifcomplete
\providecommand{\half}{\scalebox{0.7}{$\displaystyle\frac{1}{2}$}}
\newcommand{\Hobs}{\mathcal{H}}
\newcommand*{\bigdot}[1]{\accentset{\mbox{\large\bfseries .}}{#1}}
\newcommand{\x}[1][]{%
   \ifthenelse{ \equal{#1}{} }
      {X}
      {X^{[#1]}}}
\renewcommand{\u}{U}
\newcommand{\uh}[1][]{%
   \ifthenelse{ \equal{#1}{} }
      {\hat{U}}
      {\hat{u}^{(#1)}}}
\newcommand{\xb}[1][]{%
   \ifthenelse{ \equal{#1}{} }
      {X^{\mathrm{b}}}
      {\mathbf{X}^{\mathrm{b},[#1]}}}
\newcommand{\ub}[1][]{%
   \ifthenelse{ \equal{#1}{} }
      {U^{\mathrm{b}}}
      {\mathbf{U}^{\mathrm{b},[#1]}}}
\newcommand{\ua}[1][]{%
   \ifthenelse{ \equal{#1}{} }
      {U^{\mathrm{a}}}
      {\mathbf{U}^{\mathrm{a},[#1]}}}
\newcommand{\uha}[1][]{%
   \ifthenelse{ \equal{#1}{} }
      {\hat{U}^{\mathrm{a}}}
      {\mathbf{\hat{U}}^{\mathrm{a},[#1]}}}
\newcommand{\pub}[1][]{%
   \ifthenelse{ \equal{#1}{} }
      {\*\Phi_r\,U^{\mathrm{b}}}
      {\*\Phi_r\,\mathbf{U}^{\mathrm{b},[#1]}}}
\newcommand{\uhb}[1][]{%
   \ifthenelse{ \equal{#1}{} }
      {\hat{U}^{\mathrm{b}}}
      {\mathbf{\hat{U}}^{\mathrm{b},[#1]}}}
\newcommand{\puhb}[1][]{%
   \ifthenelse{ \equal{#1}{} }
      {\*\Phi_r\,\hat{U}^{\mathrm{b}}}
      {\*\Phi_r\,\mathbf{\hat{U}}^{\mathrm{b},#1}}}
\newcommand{\zb}[1][]{%
   \ifthenelse{ \equal{#1}{} }
      {Z^{\mathrm{b}}}
      {\mathbf{Z}^{\mathrm{b},#1}}}
\newcommand{\za}[1][]{%
   \ifthenelse{ \equal{#1}{} }
      {Z^{\mathrm{a}}}
      {\mathbf{Z}^{\mathrm{a},[#1]}}}
\newcommand{\xa}[1][]{%
   \ifthenelse{ \equal{#1}{} }
      {X^{\rm a}}
      {\mathbf{X}^{\mathrm{a},[#1]}}}
\newcommand{\xf}[1][]{%
   \ifthenelse{ \equal{#1}{} }
      {\mathbf{x}^{\rm f}}
      {\mathbf{x}^{{\rm f}[#1]}}}
\newcommand{\hofx}{\mathbf{z}}
\newcommand{\hofxb}[1][]{%
   \ifthenelse{ \equal{#1}{} }
      {\hofx^{\mathrm{b}}}
      {\hofx^{{\mathrm{b}}[#1]}}}
\newcommand{\hofxa}[1][]{%
   \ifthenelse{ \equal{#1}{} }
      {\hofx^{\rm a}}
      {\hofx^{{\rm a}[#1]}}}
\newcommand{\dhofxb}[1][]{%
   \ifthenelse{ \equal{#1}{} }
      {\bigdot{\hofx}^{\mathrm{b}}}
      {\bigdot{\hofx}^{{\mathrm{b}}[#1]}}}
\newcommand{\errb}[1][]{%
   \ifthenelse{ \equal{#1}{} }
      {\varepsilon^{\mathrm{b}}}
      {\varepsilon^{{\mathrm{b}}[#1]}}}
\newcommand{\erra}[1][]{%
   \ifthenelse{ \equal{#1}{} }
      {\varepsilon^{\rm a}}
      {\varepsilon^{{\rm a}[#1]}}}
\newcommand{\erro}[1][]{%
   \ifthenelse{ \equal{#1}{} }
      {\eta}
      {\boldsymbol{\eta}^{(#1)}}}
\newcommand{\errm}[1][]{%
   \ifthenelse{ \equal{#1}{} }
      {\eta}
      {\eta^{[#1]}}}
\newcommand{\wb}[1][]{%
   \ifthenelse{ \equal{#1}{} }
      {w^{\mathrm{b}}}
      {w^{{\mathrm{b}}[#1]}}}
\newcommand{\wa}[1][]{%
   \ifthenelse{ \equal{#1}{} }
      {w^{\rm a}}
      {w^{{\rm a}[#1]}}}
\def\!#1{\mathcal{#1}}
\def\*#1{\boldsymbol{\mathbf{#1}}}
\def\|#1{\textnormal{#1}}
\def\norm#1{\left\lVert#1\right\rVert}
\newcommand{\Mean}[1]{\*\mu_{#1}}
\newcommand{\MeanE}[1]{\widetilde{\*\mu}_{#1}}
\newcommand{\Cov}[2]{\*\Sigma_{#1,#2}}
\newcommand{\CovE}[2]{\widetilde{\*\Sigma}_{#1,#2}}
\newcommand{\En}[1]{\mathsf{E}_{#1}}
\renewcommand\fottitle{}
\renewcommand{\myyear}{2021}
\renewcommand{\today}{}
\begin{document}
    \graphicspath{ {Figures/} }

        \csltitlepage
        
            \volume{Volume x, Issue x, \myyear\today}%
            \title{Multifidelity Ensemble Kalman Filtering Using Surrogate Models Defined by Physics-Informed Autoencoders}%
            \titlehead{Nonlinear MFEnKF}%
            \authorhead{A.A. Popov, \& A. Sandu}%
            \corrauthor[1]{Andrey A. Popov}%
            \author[1]{Adrian Sandu}%
            \corremail{apopov@vt.edu}%
            \address{2202 Kraft Drive, Blacksburg, VA, 24060}%
            \dataO{mm/dd/yyyy}%
            \dataF{mm/dd/yyyy}%
            \abstract{Data assimilation is a Bayesian inference process that obtains an enhanced understanding of a physical system of interest by fusing information from an inexact physics-based model, and from noisy sparse observations of reality. The multifidelity ensemble Kalman filter (MFEnKF) recently developed by the authors combines a full-order physical model and a hierarchy of reduced order surrogate models in order to increase the computational efficiency of data assimilation. The standard MFEnKF uses linear couplings between models, and is statistically optimal in case of Gaussian probability densities. This work extends MFEnKF to work with non-linear couplings between the models. Optimal nonlinear projection and interpolation operators are obtained by appropriately trained physics-informed autoencoders, and this approach allows to construct reduced order surrogate models with less error than conventional linear methods. Numerical experiments with the canonical Lorenz '96 model illustrate that nonlinear surrogates perform better than linear projection-based ones in the context of multifidelity filtering.
            }%
            \keywords{Bayesian inference, control variates,  data assimilation, multifidelity ensemble Kalman filter, reduced order modeling, machine learning, surrogate models}%
            \maketitle%

\tableofcontents




\newpage
\setcounter{page}{1}

\section{Introduction}

Data assimilation~\citep{reich2015probabilistic,asch2016data} is a Bayesian inference process that fuses information obtained from an inexact physics-based model, and from noisy sparse observations of reality, in order to enhance our understanding of a physical process of interest. The reliance on physics-based models distinguishes data assimilation from traditional machine learning methodologies, which aim to learn the quantities of interest through purely data-based approaches.  From the perspective of machine learning, data assimilation is a learning problem where the quantity of interest is constrained by prior physical assumptions, as captured by the model, and nudged towards the optimum solution by small amounts of data from imperfect observations. Therefore data assimilation can be considered a form of physics-constrained machine learning  \citep{karpatne2017theory,willard2020integrating}. This work improves data assimilation methodologies by combining a mathematically rigorous data assimilation approach and a data rigorous machine learning algorithm through powerful techniques in multilevel inference \citep{giles2008multilevel,giles2015multilevel}. 

The ensemble Kalman filter~\citep{evensen1994sequential,evensen2009data,Burgers_1998_EnKF} (EnKF) is a family of computational methods that tackle the data assimilation problem using Gaussian assumptions, and a Monte Carlo approach where the underlying probability densities are represented by ensemble of model state realizations.
%
%
The ensemble size, i.e., the number of physics-based model runs, is typically the main factor that limits the efficiency of EnKF. For increasing the quality of the results when ensembles are small, heuristics correction methods such as covariance shrinkage~\citep{Sandu_2019_Covariance-parallel,Sandu_2015_covarianceShrinkage,Sandu_2018_Covariance-Cholesky} and localization~\citep{petrie2008localization,Sandu_2019_adaptive-localization,Sandu_2019_ML-localization} have been developed. As some form of heuristic correction is required for operation implementations of the ensemble Kalman filter, reducing the need for such heuristic corrections in operational implementations is an important and active area of research.


The dominant cost in operational implementations of EnKF is the large number of expensive high fidelity physics-based model runs, which we refer to as ``full order models'' (FOM). A natural approach to  increase efficiency is to endow the data assimilation algorithms with the ability to make use of inexpensive but less accurate model runs \citep{cao2007reduced,farrell2001state}, which we refer to as ``reduced order models'' (ROMs).  ROMs are constructed to capture the most important aspects of the dynamics of the FOM, at a fraction of the computational cost; typically they use a much smaller number of variables than the corresponding FOM.
The idea of leveraging model hierarchies in numerical algorithms for uncertainty quantification \citep{peherstorfer2018survey} is fast gaining traction in both the data assimilation and machine learning communities. Here we focus on two particular types of ROMs: a proper orthogonal decomposition (POD) based ROM, corresponding to a linear projection of the FOM dynamics onto a small linear subspace \citep{brunton2019data}, and a ROM based on autoencoders  \citep{aggarwal2018neural}, which are neural networks built for optimal dimensionality reduction, corresponding to a non-linear projection of the dynamics onto a small dimensional manifold.

The multifidelity ensemble Kalman filter (MFEnKF) \citep{popov2021multifidelity,popov2021chapter} is a correction and extension to the multilevel EnKF \citep{Hoel_2016_MLEnKF,Chernov_2021_MLEnKF,chada2020multilevel,hoel2019multilevel} that combines the ensemble Kalman filter with the idea of surrogate modeling. The MFEnKF attempts to utilize both the full-order and a reduced order surrogate model in order to optimally combine the information obtained from model runs with information begotten from the observations. By posing the problem in terms of a mathematically rigorous variance reduction technique---the linear control variate framework---MFEnKF is able to provide robust guarantees about the accuracy of the inference results.

While numerical weather prediction is the dominant driver of innovation in data assimilation literature~\citep{kalnay2003atmospheric}, other applications can benefit from our multifidelity approach such as mechanical engineering~\citep{Sandu_2009_ParameterExplicit,Sandu_2010_paramEstimation,Sandu_2010_polychaosEKF} and air quality modeling~\citep{Sandu_2007_EnKF_localization,Sandu_2007_EnKFgeneral,Sandu_2007_EnKFideal}.

This paper is organized as follows. Section \ref{sec:background} discusses the data assimilation problem, provides background on control variates, the EnKF, and the MFEnKF, as well as ROMs and autoencoders. Section \ref{sec:NL-MFEnKF} introduces the non-linear extension to the MFEnKF. Section \ref{sec:models} presents the Lorenz '96 model and the corresponding POD-ROM and NN-ROM surrogates. Section \ref{sec:numerical-experiments} provides the results of numerical experiments. Concluding remarks are made in Section \ref{sec:conclusions}.

\section{Background}
\label{sec:background}

 As the aphorism goes ``all models are wrong, but some are useful'', Sequential data assimilation propagates our imperfect knowledge about some physical quantity of interest through an imperfect model which represents a time-evolving dynamical system, typically a chaotic system \citep{strogatz2018nonlinear}. Without an additional influx of external information, our knowledge about the systems would rapidly degrade, therefore acquiring and assimilating imperfect external information about the quantity of interest is the core of the problem that we are interested in.

To be specific, consider a dynamical system of interest whose true state at time $t_i$ is $X^t_i$. The evoluton of the system is captured by the dynamical model
\begin{equation}\label{eq:model}
    X_{i} = \!M_{t_{i-1}, t_i}(X_{i-1}) + \Xi_i, 
\end{equation}
where $X_i$ is a random variable whose distribution describes our knowledge of the the state of a physical process at time index $i$, and $\Xi_i$ is a  random variable describing the modeling error. In this paper we assume a perfect model ($\Xi_i \equiv \*0$), as the discussion of model error in multifidelity methods is significantly outside the scope of this paper.

Additional independent information about the system is obtained through imperfect measurements of the observable aspects $Y_i$ of the truth $X^t_i$, i.e., through noisy observations 
\begin{equation}\label{eq:observation}
    Y_i = \Hobs(X^t_i) + \eta_i, \quad \eta_i\sim\!N(\*0, \Cov{\eta_i}{\eta_i}),
\end{equation}
where the ``observation operator'' $ \Hobs$ maps the model space onto the observation space (i.e., selects the observable aspects of the state). 

Our aim is to combine the two sources of information in a Bayesian framework:
\begin{equation}
    \pi(X_i | Y_i) \propto \pi(Y_i | X_i)\,\pi(X_i).
\end{equation} 

In the remainder of the paper we use the following notation. Let $W$ and $V$ be  random variables.  The exact mean of $W$ is denoted by $\Mean{W}$, and the exact covariance between $W$ and $V$ by $\Cov{W}{V}$. $\En{W}$ denotes an ensemble of samples of $W$, and $\MeanE{W}$ and $\CovE{W}{V}$ are the empirical ensemble mean of $W$ and empirical ensemble covariance of $W$ and $V$, respectively.

\subsection{Control variates}\label{sec:cv}

Bayesian inference requires that all available information is used in order to obtain correct results \citep{jaynes2003probability}. Variance reduction techniques \citep{mcbook} are methods that provide estimates of some quantity of interest with lower variability. From a Bayesian perspective they represent a reintroduction of information that was previously ignored. The linear control variate (LCV) approach \citep{rubinstein1985efficiency} is a variance reduction method that aims to incorporate the new information in an optimal linear sense.

LCV works with two vector spaces, a principal space $\mathbb{X}$ and a control space $\mathbb{U}$, and several random variables, as follows. The principal variate $X$ is a $\mathbb{X}$-valued random variable, and the control variate $\hat{U}$ is  a highly correlated $\mathbb{U}$-valued random variable. The ancillary variate $U$ is a $\mathbb{U}$-valued random variable, which is uncorrelated with the preceding two variables, but shares the same mean as $\hat{U}$, meaning $\Mean{U} = \Mean{\hat{U}}$.
The linear control variate framework builds a new $\mathbb{X}$-valued random variable  $Z$, called the total variate:
\begin{equation}
\label{eq:control-variate}
    Z = X - \*S(\hat{U} - U),
\end{equation}
where the  linear ``gain'' operator $\*S$ is used to minimize the variance in $Z$ by utilizing the information about the distributions of the constituent variates $X$, $\hat{U}$ and $U$.

In this work $\mathbb{X}$ and $\mathbb{U}$ are finite dimensional vector spaces. The dimension of $\mathbb{X}$ is $n$, the dimension of $\mathbb{U}$ is denoted by $m$ when it is the observation space, and by $r$ when it is the reduced order model state space.

The following lemma is a slight generalization of \cite[Appendix]{rubinstein1985efficiency}.
\begin{lemma}[Optimal gain]
\label{lem:optimal-gain}
The optimal gain $\*S$ that minimizes the trace of the covariance of $Z$ \eqref{eq:control-variate} is:
\begin{equation}
    \*S = \Cov{X}{\hat{U}}{(\Cov{\hat{U}}{\hat{U}} + \Cov{U}{U})}^{-1}.
\end{equation}
\end{lemma}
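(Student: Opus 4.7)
The plan is to reduce the problem to a standard quadratic minimization over the matrix $\*S$. First I would expand the covariance using bilinearity:
\begin{equation*}
\Cov{Z}{Z} = \Cov{X}{X} - \Cov{X}{\hat{U}-U}\,\*S^T - \*S\,\Cov{\hat{U}-U}{X} + \*S\,\Cov{\hat{U}-U}{\hat{U}-U}\,\*S^T.
\end{equation*}
Next I would use the uncorrelatedness hypothesis on the ancillary variate $U$: since $U$ is uncorrelated with both $X$ and $\hat{U}$, the cross terms drop out, giving $\Cov{X}{\hat{U}-U} = \Cov{X}{\hat{U}}$ and $\Cov{\hat{U}-U}{\hat{U}-U} = \Cov{\hat{U}}{\hat{U}} + \Cov{U}{U}$.

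Taking the trace and using the standard matrix calculus identities $\partial_{\*S}\tr(\*S\*A) = \*A^T$ and $\partial_{\*S}\tr(\*S\*B\*S^T) = \*S(\*B + \*B^T)$, I would compute
\begin{equation*}
\frac{\partial}{\partial \*S}\tr\Cov{Z}{Z} = -2\,\Cov{X}{\hat{U}} + 2\,\*S\bigl(\Cov{\hat{U}}{\hat{U}} + \Cov{U}{U}\bigr),
\end{equation*}
where I have used the symmetry of the covariance operators. Setting this to zero and inverting $\Cov{\hat{U}}{\hat{U}} + \Cov{U}{U}$ (which is positive definite as the sum of two covariance operators, assuming nonsingularity) yields the claimed formula for $\*S$.

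Since the objective is a convex quadratic in $\*S$ (the Hessian is essentially $\Cov{\hat{U}}{\hat{U}} + \Cov{U}{U}$, a positive semidefinite Kronecker-type structure), the critical point is automatically a global minimum, so no further second-order check is needed. The only subtle point, and the place where I would spend the most care, is justifying the matrix derivative of $\tr(\*S\*B\*S^T)$ and collapsing it using the symmetry of $\*B = \Cov{\hat{U}}{\hat{U}} + \Cov{U}{U}$ to get a clean factor of $2\*S\*B$; an alternative, completely calculus-free route would be to complete the square, writing the trace as $\tr\,\Cov{X}{X} + \tr\bigl((\*S - \*S_*)\*B(\*S - \*S_*)^T\bigr) - \tr(\*S_*\*B\*S_*^T)$ with $\*S_* = \Cov{X}{\hat{U}}\*B^{-1}$, from which the minimizer is manifestly $\*S_*$. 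I would likely present the completion-of-squares version, since it sidesteps matrix calculus entirely and simultaneously certifies optimality.
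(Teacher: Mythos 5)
Your proposal is correct. The paper itself gives no proof of this lemma---it defers to the appendix of the cited reference (Rubinstein and Marcus, 1985)---and your argument is exactly the standard one that reference carries out: expand $\Cov{Z}{Z}$ by bilinearity, use the hypothesis that $U$ is uncorrelated with both $X$ and $\hat{U}$ to reduce the cross terms to $\Cov{X}{\hat{U}}$ and the quadratic term to $\*S(\Cov{\hat{U}}{\hat{U}}+\Cov{U}{U})\*S^T$, then minimize the trace. Both of your routes (matrix calculus with the symmetry of $\*B=\Cov{\hat{U}}{\hat{U}}+\Cov{U}{U}$, or completion of squares) are sound; the completion-of-squares version is indeed preferable since it certifies the global minimum without invoking convexity separately, and requires only the invertibility of $\*B$ that the lemma implicitly assumes.
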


Here we focus on the case where the principal and control variates are related through a deterministic projection operator:
\begin{equation}\label{eq:projection}
    \hat{U} = \theta(X),
\end{equation}
and where there exists an interpolation operator such that:
\begin{equation}\label{eq:interpolation}
    X \approx \widetilde{X} = \phi(\hat{U}),
\end{equation}
which in some optimal sense approximately recovers the information embedded in $X$. The variable $\widetilde{X}$ is known as the reconstruction.

We make the assumption that the projection and interpolation operators are linear:
\begin{equation}
\label{eq:linear-projection-interpolation}
\begin{gathered}
    \theta(X) \coloneqq \*\Theta\, X,\qquad
    \phi(\hat{U}) \coloneqq \*\Phi\,\hat{U}.
    \end{gathered}
\end{equation}
We reproduce below the useful result \cite[Theorem 3.1]{popov2021multifidelity}.
\begin{theorem}
\label{thm:optimal-S}
Under the assumptions that $\hat{U}$ and $U$ have equal covariances, and that the principal variate residual is uncorrelated with the control variate, the optimal gain of \eqref{eq:control-variate} is half the interpolation operator:
\begin{equation}
\Cov{\hat{U}}{\hat{U}} = \Cov{U}{U} \quad \textnormal{and} \quad
    \Cov{(X - \*\Phi\hat{U})}{\hat{U}} = \*0
    \quad \Rightarrow \quad \*S = \half\, \*\Phi.
\end{equation}
\end{theorem}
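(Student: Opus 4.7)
The plan is to invoke Lemma~\ref{lem:optimal-gain} directly and then simplify the resulting expression using the two hypotheses. Specifically, the lemma gives
\begin{equation*}
    \*S = \Cov{X}{\hat{U}}\bigl(\Cov{\hat{U}}{\hat{U}} + \Cov{U}{U}\bigr)^{-1},
\end{equation*}
so the task reduces to rewriting the numerator using the residual hypothesis and the denominator using the equal-covariance hypothesis.

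First I would exploit the bilinearity of covariance to convert the assumption $\Cov{(X - \*\Phi\hat{U})}{\hat{U}} = \*0$ into the identity $\Cov{X}{\hat{U}} = \*\Phi\,\Cov{\hat{U}}{\hat{U}}$. This step handles the numerator in the optimal-gain formula. Next I would use $\Cov{\hat{U}}{\hat{U}} = \Cov{U}{U}$ to collapse the denominator to $2\,\Cov{\hat{U}}{\hat{U}}$. Substituting both expressions into the optimal-gain formula and cancelling $\Cov{\hat{U}}{\hat{U}}$ against its inverse yields $\*S = \tfrac{1}{2}\*\Phi$.

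The main potential obstacle is invertibility of $\Cov{\hat{U}}{\hat{U}} + \Cov{U}{U}$, which is implicitly required for Lemma~\ref{lem:optimal-gain} to apply. Since both terms are symmetric positive semidefinite and equal by hypothesis, invertibility is equivalent to $\Cov{\hat{U}}{\hat{U}}$ being strictly positive definite; if it is merely positive semidefinite, the same argument goes through with the Moore--Penrose pseudoinverse provided $\Cov{X}{\hat{U}}$ lies in the range of $\Cov{\hat{U}}{\hat{U}}$, which is guaranteed by the factorization $\Cov{X}{\hat{U}} = \*\Phi\,\Cov{\hat{U}}{\hat{U}}$ obtained in the first step. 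Aside from this minor technicality, the proof is a short algebraic manipulation with no further subtlety.
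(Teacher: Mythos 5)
Your proof is correct and is essentially the intended argument: the paper itself gives no proof here (it reproduces the result from \cite[Theorem~3.1]{popov2021multifidelity}), and the standard derivation there is exactly your two-step simplification of Lemma~\ref{lem:optimal-gain}, using bilinearity to get $\Cov{X}{\hat{U}} = \*\Phi\,\Cov{\hat{U}}{\hat{U}}$ and the equal-covariance hypothesis to collapse the denominator to $2\,\Cov{\hat{U}}{\hat{U}}$. Your remark on invertibility is a sensible extra precaution; the only minor imprecision is that with a genuine pseudoinverse one obtains $\*S = \half\,\*\Phi\,\Cov{\hat{U}}{\hat{U}}\Cov{\hat{U}}{\hat{U}}^{+}$, i.e.\ $\half\,\*\Phi$ composed with the orthogonal projector onto the range of $\Cov{\hat{U}}{\hat{U}}$, which agrees with $\half\,\*\Phi$ only up to the (irrelevant) action on the null space of the covariance.
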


Under the assumptions of Theorem \ref{thm:optimal-S} the control variate structure \eqref{eq:control-variate} is:
\begin{equation}\label{eq:optimal-control-variate}
    Z = X - \half\, \,\*\Phi\,(\hat{U} - U).
\end{equation}
    
\subsection{Linear control variates with non-linear transformations}
While working with linear transformations is easy, many applications require reducing the variance of a non-linear transformation of a random variable. We generalize the control variate framework to such an application. 

Following \cite{mcbook}, assume that our transformed principal variate is of the form $h(X)$, and the transformed control variate is of the form $g(\hat{U})$, where $h$ and $g$ are smooth non-linear functions.  Similarly, assume that the transformed ancillary variate has the form $g(U)$. The total variate $Z^h$, in the same space as $h(X)$, is defined as: 
\begin{equation}
\label{eq:nonlinear-control-variate}
    Z^h = h(X) - \*S\left( g(\hat{U}) - g(U) \right),
\end{equation}
with the optimal linear gain given by Lemma \ref{lem:optimal-gain}:
\begin{equation}
    \*S = \Cov{h(X)}{g(\hat{U})}\left(\Cov{g(\hat{U})}{g(\hat{U})} + \Cov{g(U)}{g(U)}\right)^{-1}.
\end{equation}
%
%
\begin{theorem}\label{thm:non-linear-control-gaussian}
If $\hat{U}$ and $U$ are Gaussian, and share the same mean and covariance, ($\Mean{\hat{U}} = \Mean{U}$, $\Cov{\hat{U}}{\hat{U}} = \Cov{U}{U}$), then 
\begin{equation}
        \Mean{g(\hat{U})} = \Mean{g(U)}, \quad\textnormal{and}\quad \Cov{g(\hat{U})}{g(\hat{U})} = \Cov{g(U)}{g(U)},
\end{equation}
the control variate structure \eqref{eq:nonlinear-control-variate} holds, and the optimal linear gain is:
\begin{equation}
        \*S = \half\, \Cov{h(X)}{g(\hat{U})}\Cov{g(\hat{U})}{g(\hat{U})}^{-1}.
\end{equation}
\end{theorem}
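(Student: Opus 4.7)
The plan is to establish the three claims of the theorem in sequence, with each one following from the previous by an essentially distributional argument. The driving observation is that a Gaussian law is fully determined by its mean and covariance, so the hypothesis $\Mean{\hat{U}} = \Mean{U}$ and $\Cov{\hat{U}}{\hat{U}} = \Cov{U}{U}$ forces $\hat{U}$ and $U$ to be identically distributed. Consequently, for any measurable $g$, the transformed variables $g(\hat{U})$ and $g(U)$ are also identically distributed, which immediately gives the first claim $\Mean{g(\hat{U})} = \Mean{g(U)}$ with no calculation at all; in particular, both first and second moments of $g(\hat{U})$ and $g(U)$ coincide.

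With the first claim in hand, the second claim (that the control variate structure \eqref{eq:nonlinear-control-variate} is valid, in the sense that $Z^h$ is an unbiased estimator of $\Mean{h(X)}$) is immediate: taking expectations in \eqref{eq:nonlinear-control-variate} gives $\Mean{Z^h} = \Mean{h(X)} - \*S(\Mean{g(\hat{U})} - \Mean{g(U)}) = \Mean{h(X)}$ regardless of the particular choice of $\*S$. For the third claim I would start from the general minimizer stated just before the theorem, namely $\*S = \Cov{h(X)}{g(\hat{U})}(\Cov{g(\hat{U})}{g(\hat{U})} + \Cov{g(U)}{g(U)})^{-1}$, which is itself an application of Lemma \ref{lem:optimal-gain} to the transformed triple $(h(X), g(\hat{U}), g(U))$ in place of the original triple. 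The distributional equality from the first step collapses the denominator to $2\Cov{g(\hat{U})}{g(\hat{U})}$, and factoring out the $2$ produces the claimed $\half$ and yields the stated formula.

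I do not expect a deep technical obstacle here; the argument is essentially bookkeeping on second moments together with the uniqueness of Gaussian laws given $(\Mean{}, \Cov{}{})$. The most delicate point is the treatment of the ancillary assumption: in the linear setting of Section~\ref{sec:cv} it suffices that $U$ be \emph{uncorrelated} with $X$ and $\hat{U}$, but uncorrelatedness is not preserved under nonlinear transformations, so here one implicitly needs $U$ to be \emph{independent} of $X$ in order for the cross-covariance term $\Cov{h(X)}{g(U)}$ appearing when one expands the general optimal-gain formula to vanish. Provided this strengthening is granted (which it is in the standard multifidelity setup, where the ancillary ensemble is drawn independently of the principal and control ensembles), the proof reduces to direct substitution of the equal-covariance identity into the expression furnished by Lemma \ref{lem:optimal-gain}.
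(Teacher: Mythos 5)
Your proof is correct and follows essentially the same route as the paper's: both rest on the fact that a Gaussian law is determined by its first two moments, so $g(\hat{U})$ and $g(U)$ are identically distributed, the difference $g(\hat{U}) - g(U)$ is unbiased, and the optimal gain of Lemma \ref{lem:optimal-gain} collapses to $\half\,\Cov{h(X)}{g(\hat{U})}\Cov{g(\hat{U})}{g(\hat{U})}^{-1}$ once the two covariances in the denominator are identified. Your closing remark---that the ancillary variate must be \emph{independent} of, not merely uncorrelated with, the principal and control variates for the cross-covariance $\Cov{h(X)}{g(U)}$ to vanish under a nonlinear $g$---is a genuine refinement that the paper's one-paragraph proof leaves implicit.
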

\begin{proof}
    As $\hat{U}$ and $U$ are Gaussian, they are completely defined by their mean and covariance, and any deterministic non-linear transformation of the variates has the same moments, thereby sharing the mean and covariance. In that case $g(\hat{U}) - g(U)$ is unbiased, and the control variate framework can be applied.
\end{proof}

We now consider the case where the principal variate transformation is linear, $h = \*H$, and the control and ancillary variate transformation is the interpolation operator, $g = \phi$, such that the underlying control variate is a projection of the principal variate \eqref{eq:projection},
\begin{equation}
    \*H\, X \approx \phi(\hat{U}).
\end{equation}

\begin{theorem}\label{thm:non-linear-optimal-gain}
    Without proof, if the assumption of Theorem \ref{thm:non-linear-control-gaussian} hold, and
    the transformed principal variate residual is uncorrelated with the transformed control variate,
\begin{equation}
        \Cov{\left(\*H X - \phi(\hat{U})\right)}{\phi(\hat{U})} \stackrel{\rm assumed}{=} \*0,
\end{equation}
    then the optimal gain is 
\begin{equation}
        \*S = \half\, \*I,
\end{equation}
    where $\*I$ is the identity operator.
\end{theorem}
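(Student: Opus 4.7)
The plan is to chain the optimal gain formula established in Theorem \ref{thm:non-linear-control-gaussian} with the residual orthogonality hypothesis, and read off that $\*S$ must be half the identity. Because the previous theorem already delivers a closed form for $\*S$ in the Gaussian setting, no additional probabilistic machinery is needed here; the work is purely algebraic.

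First, I would invoke Theorem \ref{thm:non-linear-control-gaussian} with the specific choices $h = \*H$ (linear) and $g = \phi$ (the non-linear interpolation). Since $\hat{U}$ and $U$ are Gaussian with identical means and covariances, the hypothesis of that theorem is satisfied, so the non-linear control variate structure \eqref{eq:nonlinear-control-variate} is unbiased and the optimal gain takes the form
\begin{equation}
    \*S \;=\; \half\,\Cov{\*H X}{\phi(\hat{U})}\,\Cov{\phi(\hat{U})}{\phi(\hat{U})}^{-1}.
\end{equation}

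Next, I would rewrite the residual assumption $\Cov{(\*H X - \phi(\hat{U}))}{\phi(\hat{U})} = \*0$ using bilinearity of the covariance operator, obtaining
\begin{equation}
    \Cov{\*H X}{\phi(\hat{U})} \;=\; \Cov{\phi(\hat{U})}{\phi(\hat{U})}.
\end{equation}
Substituting this identity into the previous display causes the covariance of $\phi(\hat{U})$ to cancel against its inverse, leaving $\*S = \half\,\*I$, which is the claimed result. Implicitly this requires $\Cov{\phi(\hat{U})}{\phi(\hat{U})}$ to be invertible on the relevant subspace; I would note this as a standing non-degeneracy condition rather than a new assumption, mirroring the treatment in the linear analogue Theorem \ref{thm:optimal-S}.

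The only genuinely delicate point, and the one I would flag as the main obstacle, is that the covariance $\Cov{\*H X}{\phi(\hat{U})}$ need not be square in general: $\*H X$ lives in the observation space of dimension $m$ while $\phi(\hat{U})$ lives in the principal space of dimension $n$. The residual orthogonality assumption is what forces these two covariances to agree, and hence forces the product in the gain formula to collapse to the identity on the correct space. Making this matching of spaces explicit—so that $\*I$ is unambiguously the identity on the image of $\phi$—is the piece I would write out most carefully; everything else reduces to one line of covariance algebra.
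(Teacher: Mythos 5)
Your argument is correct and is exactly the intended one: the paper states this result ``without proof,'' and the natural derivation is precisely your two steps---specialize the gain formula of Theorem~\ref{thm:non-linear-control-gaussian} to $h=\*H$, $g=\phi$, then use bilinearity of covariance to turn the residual-orthogonality assumption into $\Cov{\*H X}{\phi(\hat{U})} = \Cov{\phi(\hat{U})}{\phi(\hat{U})}$ and cancel against the inverse, mirroring the proof of the linear analogue Theorem~\ref{thm:optimal-S}. Your caveats about invertibility of $\Cov{\phi(\hat{U})}{\phi(\hat{U})}$ and about the spaces in which $\*H X$ and $\phi(\hat{U})$ live are both apt (the latter is a looseness in the paper's own notation, since the difference $\*H X - \phi(\hat{U})$ only makes sense when the two ranges coincide), and neither undermines the proof.
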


\subsection{The Ensemble Kalman Filter}

The EnKF is a statistical approximation to the optimal control variate structure \eqref{eq:control-variate}, where the underlying probability density functions are represented by empirical measures using ensembles, i.e., a finite number of realizations of the random variables. The linear control variate framework allows to combine multiple ensembles into one that better represents the desired quantity of interest.

Let $\En{X^b_i} \in \mathbb{R}^{n\times N_X}$ be an ensemble of $N_X$ realizations of the $n$-dimensional principal variate, which represents our prior uncertainty in the model state at time index $i$ from \eqref{eq:model}. Likewise, let $\En{\Hobs_i(X^b_i)} = \Hobs_i(\En{X^b_i}) \in \mathbb{R}^{m\times N_X}$ be an ensemble of $N_X$ realizations of the $m$-dimensional control observation state variate, which represents the same model realizations cast into observation space. Let $\En{Y_i} \in \mathbb{R}^{m\times N_X}$ be an ensemble of $N_X$ `perturbed observations', which is a statistical correction required in the ensemble Kalman filter \citep{Burgers_1998_EnKF}.

\begin{remark}[EnKF Perturbed Observations]
\label{rem:perturbed-observations}
Each ensemble member of the perturbed observations is sampled from a Gaussian distribution with mean the measured value, and the known observation covariance from \eqref{eq:observation}:
\begin{equation}\label{eq:EnKF-perturbed-observations}
    [\En{Y_i}]_{:,e} \sim \!N(\Mean{Y_i}, \Cov{\eta_i}{\eta_i}). 
\end{equation}
\end{remark}

The prior ensemble at time step $i$ is obtained by propagating the posterior ensemble at time $i-1$ through the model equations,
\begin{equation}
    \En{X^b_i} = \!M_{t_{i-1},t_i}\left(\En{X^a_{i-1}}\right),
\end{equation}
where the slight abuse of notation indicates an independent model propagation of each ensemble member.
Application of the Kalman filter formula constructs an ensemble $\En{X^a_i}$ describing the posterior uncertainty:
\begin{equation}
    \En{X^a_i} = \En{X^b_i} - \widetilde{\*K}_i\left(\En{\Hobs_i(X^b_i)} - \En{Y_i}\right),
\end{equation}
where the  statistical Kalman gain is an ensemble-based approximation to the exact gain in Lemma \ref{lem:optimal-gain}:
\begin{equation}
    \widetilde{\*K}_i = \CovE{X^b_i}{\Hobs_i(X^b_i)}\left(\CovE{\Hobs_i(X^b_i)}{\Hobs_i(X^b_i)} + \Cov{\eta_i}{\eta_i}\right)^{-1}.
\end{equation}

\begin{remark}[Inflation]\label{rem:inflation}
Inflation is a probabilistic correction necessary to account for the Kalman gain being correlated to the ensemble  \citep{popov2020explicit}. In inflation the ensemble anomalies (deviations from the statistical mean) are multiplied by a constant $\alpha>1$, thereby increasing the covariance of the distribution described by the ensemble:
\begin{equation}
        \En{\xb_{i+1}} \leftarrow \MeanE{\xb_{i+1}} + \alpha \left(\En{\xb_{i+1}} - \MeanE{\xb_{i+1}}\right).
\end{equation}
\end{remark}

\subsection{The Multifidelity Ensemble Kalman Filter}

The Multifidelity Ensemble Kalman Filter (MFEnKF) \citep{popov2021multifidelity} seeks to merge the information from a hierarchy of models and the corresponding observations into a coherent representation that can be propagated forward in time. This necessitates that the information from the  models is decoupled, but implicitly preserving of some underlying structure; we take the linear control variate structure for convenience.

Without loss of generality, we discuss here a bi-fidelity approach, with a telescopic extension to multiple fidelities provided at the end of the section.
Instead of having access to one model $\!M$, assume that we have access to a hierarchy of models. In the bi-fidelity case, the principal space model is denoted by $\!M^X$ and the control space model is denoted by $\!M^U$. 
%

We now consider the total variate
\begin{equation}\label{eq:MFEnKF-linear-control-variate}
    \zb_i = \xb_i - \half\, \*\Phi\,(\uhb_i - \ub_i),
\end{equation}
that describes the prior total information from a model that evolves in principal space ($\xb_i$) and a model that evolves in ancillary space ($\uhb_i$ and $\ub_i$).

Assume that our prior total variate is represented by the three ensembles $\En{\xb_i} \in \mathbb{R}^{n\times N_X}$ consisting of $N_X$ realizations of the $n$-dimensional principal model state variate , $\En{\uhb_i} \in \mathbb{R}^{r\times N_X}$ consisting of $N_X$ realizations of the $r$-dimensional control model state variate, and $\En{\ub_i} \in \mathbb{R}^{r\times N_U}$ consisting of $N_U$ realizations of the $r$-dimensional ancillary model state variate. Each of these ensembles has a corresponding ensemble of $m$-dimensional control observation space realizations.

MFEnKF performs sequential data assimilation using the above constituent ensembles, without having to explicitly calculate the ensemble of the total variates. The MFEnKF forecast step propagates the three ansembles form the previous step:
\begin{equation}
    \begin{aligned}
    \En{\xb_i} &= \!M^X_{t_{i-1},t_i}(\En{\xa_{i-1}}),\\
    \En{\uhb_i} &= \!M^U_{t_{i-1},t_i}(\En{\uha_{i-1}}),\\
    \En{\ub_i} &= \!M^U_{t_{i-1},t_i}(\En{\ua_{i-1}}).
    \end{aligned}
\end{equation}

Two observation operators $\Hobs_i^X$ and $\Hobs_i^U$ cast the principal model and control model spaces, respectively, into the control observation space. In this paper we assume that he principal model space observation operator is the canonical observation operator \eqref{eq:observation}:
\begin{equation}\label{eq:MFEnKF-principal-model-space-observation}
    \Hobs^X_i(X_i) \coloneqq \Hobs_i(X_i),
\end{equation}
and that the control model space observation operator is the canonical observation operator \eqref{eq:observation} applied to the linear interpolated reconstruction \eqref{eq:interpolation} of a variable in control model space:
\begin{equation}\label{eq:MFEnKF-control-model-space-observation}
    \Hobs^U_i(U_i) \coloneqq \Hobs_i(\*\Phi U_i).
\end{equation}
Additionally, we define an (approximate) observation operator for the total model variate :
\begin{equation}\label{eq:MFEnKF-total-model-space-observation}
    \Hobs_i^Z(Z_i) \coloneqq \Hobs_i^X(X_i) - \half\, \left(\Hobs_i^U(\hat{U}_i) - \Hobs_i^U(U_i)\right),
\end{equation}
which, under the linearity assumptions on $\Hobs_i^X$ of Theorem \ref{thm:non-linear-optimal-gain} and the underlying Gaussian assumptions on $\hat{U}_i$ and $U_i$ of Theorem \ref{thm:non-linear-control-gaussian}, begets that $\Hobs_i^Z = \Hobs_i^X$. Even without the linearity assumption  the definition \eqref{eq:MFEnKF-total-model-space-observation} is operationally useful.

The MFEnKF analysis updates each constituent ensemble as follows:
\begin{equation}\label{eq:MFEnKF}
    \begin{aligned}
    \En{\xa_i} &= \En{\xb_i} - \widetilde{\mathbf{K}}_i\,\left(\En{\Hobs_i^X(\xb_i)} - \En{Y^{\x}_i}\right),\\
     \En{\uha_i} &= \En{\uhb_i} - \*\Theta\widetilde{\mathbf{K}}_i\,\left((\En{\Hobs_i^U(\uhb_i)} - \En{Y^{\x}_i}\right),\\
     \En{\ua_i} &= \En{\ub_i} - \*\Theta\widetilde{\mathbf{K}}_i\,\left((\En{\Hobs_i^U(\ub_i)} - \En{Y^{\u}_i}\right),
    \end{aligned}
\end{equation}
with the heuristic correction to the means
\begin{equation}\label{eq:MFEnKF-mean-correction}
    \MeanE{\xa_i} \leftarrow \MeanE{Z_i^a},\quad \MeanE{\uha_i} \leftarrow \*\Theta\MeanE{Z_i^a},\quad \MeanE{\ua_i} \leftarrow \*\Theta\MeanE{Z_i^a},
\end{equation}
applied in order to fulfill the unbiasedness requirement of the control variate structure:
\begin{equation}\label{eq:MFEnKF-mean}
\MeanE{\za_i} = \MeanE{\zb_i} - \widetilde{\mathbf{K}}_i\,\left(\MeanE{\Hobs_i^X(\zb_i)} - \Mean{Y_i}\right).
\end{equation}
The Kalman gain and the covariances are defined by the semi--linearization:
\begin{align}
    \widetilde{\mathbf{K}}_i &= \CovE{\zb_i}{\Hobs_i^Z(\zb_i)}\left(\CovE{\Hobs_i^Z(\zb_i)}{\Hobs_i^Z(\zb_i)} + \Cov{\eta_i}{\eta_i}\right)^{-1}\\
    \CovE{\zb_i}{\Hobs_i^Z(\zb_i)} &=
    \begin{multlined}[t]
        \CovE{\xb_i}{\Hobs_i^X(\xb_i)} + \frac{1}{4}\CovE{\*\Phi\uhb_i}{\Hobs_i^U(\uhb_i)} + \frac{1}{4}\CovE{\*\Phi\ub_i}{\Hobs_i^U(\ub_i)}\\
        -\half\, \CovE{\xb_i}{\Hobs_i^U(\uhb_i)} - \half\, \CovE{\*\Phi\uhb_i}{\Hobs_i^X(\xb_i)},
    \end{multlined} \label{eq:MFEnKF-Cov1}\\
    \CovE{\Hobs_i^Z(\zb_i)}{\Hobs_i^Z(\zb_i)} &=
    \begin{multlined}[t]
        \CovE{\Hobs_i^X(\xb_i)}{\Hobs_i(\xb_i)} + \frac{1}{4}\CovE{\Hobs_i^U(\uhb_i)}{\Hobs_i^U(\uhb_i)} + \frac{1}{4}\CovE{\Hobs_i^U(\ub_i)}{\Hobs_i^U(\ub_i)}\\
        -\half\, \CovE{\Hobs_i^X(\xb_i)}{\Hobs_i^U(\uhb_i)} - \half\, \CovE{\Hobs_i^U(\uhb_i)}{\Hobs_i^X(\xb_i)}.
    \end{multlined}\label{eq:MFEnKF-Cov2}
\end{align}

In order to ensure that the control variate $\hat{U}$ remains highly correlated to the principal variate $X$, at the end of each analysis step we replace the analysis control variate ensemble with the corresponding projection of the principal variate ensemble:
\begin{equation}\label{eq:MFEnKF-control-variate-correction}
    \En{\uha_i} \leftarrow \*\Theta\,\En{\xa_i}.
\end{equation}

\begin{remark}[MFEnKF Perturbed observations]\label{rem:MFEnKF-perturbed-observations}
    There is no unique way to perform perturbed observations (remark \ref{rem:perturbed-observations}) in the MFEnKF. We will present one way in this paper. As Theorem \ref{thm:non-linear-control-gaussian} requires both the control and ancillary variates to share the same covariance, we utilize here the `control space uncertainty consistency' approach.  The perturbed observations ensembles in \eqref{eq:MFEnKF} is defined by:
    \begin{align}
        [\En{Y^{\x}_i}]_{:,e} &\sim \!N(\Mean{Y_i}, \Cov{\eta_i}{\eta_i}),\\
        [\En{Y^{\u}_i}]_{:,e} &\sim \!N(\Mean{Y_i}, s\Cov{\eta_i}{\eta_i}),
    \end{align}
    where the scaling factor is $s = 1$.
    See  \cite[Section 4.2]{popov2021multifidelity}  for a more detailed discussion about perturbed observations.
\end{remark}

\begin{remark}[MFEnKF Inflation]
    Similarly to the EnKF (see Remark \ref{rem:inflation}), the MFEnKF also requires inflation in order to account for the statistical Kalman gain being correlated to its constituent ensembles. For a bi-fidelity MFEnKF, two inflation factors are required: $\alpha_X$ which acts on the anomalies of the principal and control variates (as they must remain highly correlated) and $\alpha_U$ which acts on the ensemble anomalies of the ancillary variate:
\begin{equation}
        \begin{gathered}
        \En{\xb_{i+1}}  \leftarrow \MeanE{\xb_{i+1}} + \alpha_X \left(\En{\xb_{i+1}} - \MeanE{\xb_{i+1}}\right),\\ \En{\uhb_{i+1}} \leftarrow \MeanE{\uhb_{i+1}} + \alpha_X \left(\En{\uhb_{i+1}} - \MeanE{\uhb_{i+1}}\right),\\
        \En{\ub_{i+1}} \leftarrow \MeanE{\ub_{i+1}} + \alpha_U \left(\En{\ub_{i+1}} - \MeanE{\ub_{i+1}}\right).
        \end{gathered}
\end{equation}
\end{remark}

\subsection{Projection-based reduced order models} 
It is well known that the important information of many dynamical systems can be expressed with significantly fewer dimensions than the discretization dimension $n$ \citep{sell2013dynamics}. For many infinite dimensional equations it is possible to construct a finite-dimensional inertial manifold that represents the dynamics of the system (including the global attractor). The Hausdorff dimension of the global attractor of some dynamical system is a useful lower bound for the minimal representation of the dynamics, though a representation of just the attractor is likely not sufficient to fully capture all the `useful' aspects of the data.
For data-based reduced order models an important aspect is the intrinsic dimension \citep{lee2007nonlinear} of the data. The authors' are not aware of any formal statements relating the dimension of an inertial manifold and the intrinsic dimension of some finite discretization of the dynamics.
For some reduced dimension $r$ that is sufficient to represent either the dynamics or the data, or both, it is possible to build a `useful' surrogate model.

We will now discuss the construction of reduced order models for problems posed as ordinary differential equations.
The following derivations are similar to those found in \cite{lee2020model}, but assume vector spaces and no re-centering.

Just like in the control variate framework in section \ref{sec:cv}, our full order model will reside in the principal space $\mathbb{X}$ and our reduced order model will be defined in the space  $\mathbb{U}$, which is related to  $\mathbb{X}$ through some coupling.

Given an initial value problem in $\mathbb{X}$:
\begin{equation}
    \frac{\mathrm{d} X}{\mathrm{d} t} = f(X), \quad X(t_0) = X_0, \quad t \in [t_0, t_f],
\end{equation}
and the projection operator \eqref{eq:projection}, the induced reduced order model initial value problem in $\mathbb{U}$ is defined by simple differentiation of $U = \theta(X)$, by dynamics in the space $\mathbb{U}$,
\begin{equation}
    \frac{\mathrm{d} U}{\mathrm{d} t} = \theta'(X)f(X), \quad X(t_0) = X_0, \quad t \in [t_0, t_f].
\end{equation}
As is common, the full order trajectory is not available during integration, as there is no bijection from $\mathbb{X}$ to $\mathbb{U}$, thus an approximation using the interpolation operator \eqref{eq:interpolation} that fully resides in $\mathbb{U}$ is used instead:
\begin{equation}
\label{eq:reduced-order-model}
    \frac{\mathrm{d} U}{\mathrm{d} t} = \theta'(\phi(U))f(\phi(U)), \quad U(t_0) = \phi(X_0), \quad t \in [t_0, t_f].
\end{equation}

Note that this is not the only way to obtain a reduced order model by using arbitrary projection and interpolation operators. It is however the simplest extension of the POD-based ROM framework.

\begin{remark}[Linear ROM]
In the linear case \eqref{eq:linear-projection-interpolation} the reduced order model \eqref{eq:reduced-order-model} takes the form
\begin{equation}\label{eq:linear-reduced-order-model}
    \frac{\mathrm{d} U}{\mathrm{d} t} = \*\Theta f(\*\Phi U), \quad U(t_0) = \phi(X_0), \quad t \in [t_0, t_f].
\end{equation}
\end{remark}

\subsection{Autoencoders}

An autoencoder~\citep{aggarwal2018neural} is an artificial neural network consisting of two components, an encoder $\theta$ and a decoder $\phi$, such that given a variable $X$ in the principal space, the variable
\begin{equation}
\label{eq:projection}
    \hat{U} = \theta(X),
\end{equation}
resides in the control space of the encoder. Conversely the reconstruction,
\begin{equation}
\label{eq:reconstruction}
    \tilde{X} = \phi(\hat{U}),
\end{equation}
is an approximation to $X$ in the principal space. In our context we think of the encoder as a nonlinear projection operator \eqref{eq:projection}, and of the decoder as a nonlinear interpolation operator \eqref{eq:interpolation}. 
While the relative dimension $n$ of the principal space is relatively high, the arbitrary structure of an artificial neural networks allows the autoencoder to learn the optimal $r$-dimensional (small) representation of the data.

\section{Non-linear projection-based MFEnKF}
\label{sec:NL-MFEnKF}

We extend MFEnKF to work with non-linear projection and interpolation operators. 
Theoretical extensions of the linear control variate framework to the non-linear case~\citep{nelson1987control} are not as-of-yet satisfactory. As is traditional in data assimilation literature, we resort to heuristic assumptions, which in practice may work significantly better than the theory predicts. The new algorithm is named NL-MFEnKF.

The main idea is to replace the optimal control variate structure for linear projection and interpolation operators \eqref{eq:optimal-control-variate} with one that works with their non-linear counterparts \eqref{eq:nonlinear-control-variate}:
\begin{equation}\label{eq:NL-MFEnKF-control-variate}
    Z_i^b = X_i^b - \half\, \left(\phi(\hat{U}_i^b) - \phi(U_i^b)\right).
\end{equation}
We assume that $\hat{U}$ and $U$ are Gaussian, and have the same mean and covariance, such that they obey the assumptions made in Theorem \ref{thm:non-linear-control-gaussian} and in Theorem \ref{thm:non-linear-optimal-gain} for the optimal gain.

 Similar to MFEnKF \eqref{eq:MFEnKF-control-model-space-observation}, the control model space observation operator  is the application of the canonical observation operator \eqref{eq:observation} to the reconstruction 
\begin{equation}\label{eq:NLMFEnKF-non-control-model-space-observation}
        \Hobs_i^U(U_i) \coloneqq \Hobs_i(\phi(U_i)),
\end{equation}
with the other observation operators (\ref{eq:MFEnKF-principal-model-space-observation},~\ref{eq:MFEnKF-total-model-space-observation}) defined as in the MFEnKF.
    
    \begin{remark}
        It is of independent interest to explore control model space observation operators that are not of the form \eqref{eq:NLMFEnKF-non-control-model-space-observation}. For example, if the interpolation operator $\phi$ is created through an autoencoder, the control model space observation operator $\Hobs^U$ could similarly be a different decoder of the same latent space.
    \end{remark}

The MFEnKF equations \eqref{eq:MFEnKF} are replaced by their non-linear counterparts in a manner similar to what is done with non-linear observation operators,
\begin{equation}\label{eq:NL-MFEnKF}
    \begin{aligned}
        \En{\xa_i} &= \En{\xb_i} - \widetilde{\mathbf{K}}_i\,\left(\En{\Hobs_i^X(\xb_i)} - \En{Y^{\x}_i}\right),\\
        \En{\uha_i} &= \En{\uhb_i} - \widetilde{\mathbf{K}}^\theta_i\,\left(\En{\Hobs_i^U(\uhb_i)} - \En{Y^{\x}_i}\right),\\
        \En{\ua_i} &= \En{\ub_i} - \widetilde{\mathbf{K}}^\theta_i\,\left(\En{\Hobs_i^U(\ub_i)} - \En{Y^{\u}_i}\right),\\
    \end{aligned}
\end{equation}
where, as opposed to \eqref{eq:MFEnKF}, there are now two Kalman gains, defined by,
\begin{align}
    \widetilde{\*K}_i &= \CovE{\zb_i}{\Hobs_i^Z(\zb_i)}\left(\CovE{\Hobs_i^Z(\zb_i)}{\Hobs_i^Z(\zb_i)} + \Cov{\eta_i}{\eta_i}\right)^{-1},\\
    \widetilde{\*K}^\theta_i &= \CovE{\theta(\zb_i)}{\Hobs_i^Z(\zb_i)}\left(\CovE{\Hobs_i^Z(\zb_i)}{\Hobs_i^Z(\zb_i)} + \Cov{\eta_i}{\eta_i}\right)^{-1},
\end{align}
for our purposes the covariances are semi-linear approximations, similar to \eqref{eq:MFEnKF-Cov1} and \eqref{eq:MFEnKF-Cov2},
and the perturbed observations are defined just like in the MFEnKF (Remark \ref{rem:MFEnKF-perturbed-observations}).
    
\subsection{NL-MFEnKF Heuristic Corrections}

For linear operators the projection of the mean is the mean of the projection. This is however not true for general non-linear operators. Thus in order to correct the means like in the MFEnKF \eqref{eq:MFEnKF-mean-correction}, additional assumptions have to be made.

The empirical mean of the total analysis variate (similar to \eqref{eq:NL-MFEnKF-control-variate} and \eqref{eq:MFEnKF-mean})  is
\begin{equation}\label{eq:NL-MFEnKF-mean-Z}
        \MeanE{Z_i^a} = \MeanE{X_i^a} - \half\, \left(\MeanE{\phi(\hat{U}_i^a)} - \MeanE{\phi(U_i^a)}\right).
\end{equation}
We use it to find the optimal mean adjustments in reduced space.
Specifically, we set the mean of the principal variate to be the mean of the total variate \eqref{eq:NL-MFEnKF-mean-Z},
\begin{equation}
    \MeanE{X_i^a} \leftarrow \MeanE{Z_i^a},
\end{equation}
enforce the recorrelation of the principal and control variates \eqref{eq:MFEnKF-control-variate-correction}) via
\begin{equation}
    \En{\hat{U}_i^a} \leftarrow \theta(\En{X_i^a}),
\end{equation}
and define the control variate mean adjustment as,
\begin{equation}\label{eq:NL-MFEnKF-control-variate-mean-correction}
    \MeanE{\hat{U}_i^a} \leftarrow \MeanE{\theta(X_i^a)}.
\end{equation}
Unlike the linear control variate framework of the MFEnKF \eqref{eq:MFEnKF-linear-control-variate}, the non-linear framework of the NL-MFEnKF \eqref{eq:NL-MFEnKF-control-variate} does not induce a unique way to impose unbiasedness on the control-space variates.
There are many possible non-linear formulations to the MFEnKF, and many possible heuristic corrections of the mean the ancillary variate. 
Here we discuss three approaches based on:
\begin{enumerate}[topsep=3pt,itemsep=0pt]
    \item control space unbiased mean adjustment,
    \item principal space unbiased mean adjustment, and
    \item Kalman approximate mean adjustment,
\end{enumerate}
each stemming from a different assumption on the relationship between the ancillary variate and the other variates.

\subsubsection{Control space unbiased mean adjustment}
By assuming that the control variate $\hat{U}_i^a$ and  and the ancillary variate $U_i^a$ are unbiased in the control space, the mean adjustment of $\hat{U}^a$ in \eqref{eq:NL-MFEnKF-control-variate-mean-correction} directly defines the mean adjustment of the ancillary variate:
\begin{equation}\label{eq:NL-MFEnKF-control-space-mean-correction}
    \MeanE{U_i^a} \leftarrow \MeanE{\hat{U}_i^a}.
\end{equation}
The authors will choose this method of correction in the numerical experiments for both its properties and ease of implementation.

\subsubsection{Principal space unbiased mean adjustment}
If instead we assume that the control variate $\phi(\hat{U}_i)$ and the ancillary variate $\phi(U_i)$ are unbiased in the principal space, then the mean of the total variate $Z_i$ \eqref{eq:NL-MFEnKF-control-variate} equals the mean of the principal variate $X_i$, a desirable property. 

Finding a mean adjustment for $U_i^a$ in the control space such that the unbiasedness is satisfied in the principal space is a non-trivial problem.
Explicitly, we seek a vector $\nu$ such that: 
\begin{equation}\label{eq:NL-MFEnKF-inverse-problem-mean-correction}
    \MeanE{\phi(\hat{U}_i^a)} = \MeanE{\phi(U_i^a - \MeanE{U_i^a} + \nu)}.
\end{equation}
%
The solution to \eqref{eq:NL-MFEnKF-inverse-problem-mean-correction} requires the solution to an expensive inverse problem.
Note that \eqref{eq:NL-MFEnKF-inverse-problem-mean-correction} is equivalent to \eqref{eq:NL-MFEnKF-control-space-mean-correction} under the assumptions of Theorem \ref{thm:non-linear-control-gaussian}, in the limit of ensemble size.

\subsubsection{Kalman approximate mean adjustment}
Instead of assuming that the control and ancillary variates are unbiased, we can consider directly the mean of the control-space total variate:
\begin{equation}
    \MeanE{\theta(Z_i^a)} = \MeanE{\theta(X_i^a)} - \half\, \left(\MeanE{\hat{U}_i^a} - \MeanE{U_i^a}\right),
\end{equation}
defined with the mean values in NL-MFEnKF formulas \eqref{eq:NL-MFEnKF}. The following adjustment to the mean of the ancillary variate:
\begin{equation}
    \MeanE{U_i^a} \leftarrow \MeanE{\theta(Z_i^a)},
\end{equation}
is not unbiased with respect to the control variate in any space, but provides a heuristic approximation of the total variate mean in control space.

\subsection{Telescopic extension}
As in \cite[Section 4.5]{popov2021multifidelity} one can telescopically extend the bi-fidelity NL-MFEnKF algorithm to $\!L+1$ fidelities.
Assume that the nonlinear operator $\phi_\ell$ interpolates from the space of fidelity $\ell$ to the space of fidelity $\ell-1$, where $\phi_1$ interpolates to the principal space.  A telescopic extension of \eqref{eq:NL-MFEnKF-control-variate} is
\begin{equation}
    Z = X - \sum_{\ell = 1}^{\!L} 2^{-\ell} \left(\overline{\phi}(\hat{U}_\ell) - \overline{\phi}(U_\ell)\right),
\end{equation}
where the projection operator at each fidelity is defined as,
\begin{equation}
    \overline{\phi}_\ell = \phi_1 \circ \cdots \circ \phi_\ell,
\end{equation}
projecting from the space of fidelity $\ell$ to the principal space.
The telescopic extension of the NL-MFEnKF is not analyzed further in this work.

\section{Models for numerical experiments}
\label{sec:models}

For numerical experiments we use the Lorenz '96 model, and two surrogate models that approximate it's dynamics:
\begin{enumerate}[topsep=3pt,itemsep=0pt]
\item a principal orthogonal decomposition-based quadratic reduced order model (POD-ROM), and
\item an autoencoder neural network-based reduced order model (NN-ROM). 
\end{enumerate}
For each case we construct reduced order models (ROMs) for reduced dimension sizes of $r= 7$, $14$, $21$, $28$, and $35$.
The Lorenz '96 and the POD-ROM models are implemented in the ODE-test-problems suite~\citep{otp,otpsoft}.

\subsection{Lorenz '96}

The Lorenz '96 model \citep{lorenz1996predictability} is conjured from the PDE \citep{reich2015probabilistic},
\begin{equation}
    \frac{\mathrm{d} \*y}{\mathrm{d} t} = -\*y\*y_x - \*y + F,
\end{equation}
where the forcing parameter is set to $F=8$. In the semi-discrete version $\*y \in \mathbb{R}^n$, and the nonlinear term is approximated by a numerically unstable finite difference approximation,
\begin{equation}
    \left[\*y\*y_x\right]_k = \bigl(\widehat{\*I}\*y\bigr)\cdot \bigl(\widehat{\*D}\*y\bigr) = \left([\*y]_{k-1}\right)\cdot\left([\*y]_{k-2} - [\*y]_{k+1}\right),
\end{equation}
where $\widehat{\*I}$ is a (linear) shift operator, and the linear operator $\widehat{\*D}$ is a first order approximation to the first spatial derivative. The canonical $n=40$ variable discretization with cyclic boundary conditions is used. The canonical fourth order Runge-Kutta method is used to discretize the time dimension.

For the given discrete formulation of the Lorenz '96 system, $14$ represents the number of non-negative Lyapunov exponents, $28$ represents the rounded-up Kaplan-Yorke dimension of $27.1$, and $35$ represents an approximation of the intrinsic dimension of the system (calculated by the method provided by \cite{bahadur2019dimension}). To the authors' knowledge, the inertial manifold of the system, if it exists, is not known. The relatively high ratio between the intrinsic dimension of the system and the spatial dimension of the system makes constructing a reduced order model particularly challenging.

\subsubsection{Data for constructing reduced-order models.}
The data to construct the reduced order models is taken to be $T=5000$ state snapshots from a representative model run. The snapshots are spaced $36$ time units apart, equivalent to six months in model time.

\subsection{Proper Orthogonal decomposition ROM} 

Using the method of snapshots \citep{sirovich1987turbulence1}, we construct optimal linear operators, $\*\Phi^T = \*\Theta \in \mathbb{R}^{r\times n}$, such that the projection captures the dominant orthogonal modes of the system dynamics.
The reduced order model approximation with linear projection and interpolation operators \eqref{eq:linear-reduced-order-model} is quadratic (similar to \cite{mou2021reduced,san2015stabilized})
\begin{equation}
    \frac{\mathrm{d}\*u}{\mathrm{d} t} = \*a + \*B \*u + \*u^T \!C \*u,
\end{equation}
where the corresponding vector $\*a$, matrix $\*B$, and 3-tensor $\!C$ are defined by:
\begin{subequations}
    \begin{align}
        \*a &= F\*\Theta\*1_{n},\\
        \*B &= -\*\Theta\*\Phi,\\
        [\!C]_{pqr} &= -\bigl(\widehat{\*I}\*\Phi_{:,p}\bigr)^T\, \bigl(\widehat{\*D}\*\Phi_{:,q}\bigr)\,\*\Phi_{:,r}.
    \end{align}
\end{subequations}

\subsection{Autoencoder based ROM}

We now discuss building the neural network-based reduced order model (NN-ROM). Given the principal space variable $X$, consider an encoder that computes its projection $\hat{U}$ onto the control space \eqref{eq:projection}, and a decoder that computes the reconstruction $\widetilde{X}$ \eqref{eq:reconstruction}.

Canonical autoencoders simply aim to minimize the reconstruction error:
\begin{equation}\label{eq:autoencoder-reconstruction-error}
    X \approx \tilde{X},
\end{equation}
which attempts to capture the dominant modes of the intrinsic manifold of the data. However, for our purposes, we seek to preserve two additional properties.

The first property that is especially important to our application is the stable preservation of the latent space through multiple projections and interpolations:
\begin{equation}\label{eq:latent-reconstruction}
    \theta(\phi(\hat{U}) ) \approx \hat{U},
\end{equation}
which we will call the {\it left-inverse property}.
For POD, this property is automatically preserved by construction and the linearity of the methods. For non-linear operators, the authors have not explicitly seen this property preserved, however, as the MFEnKF requires sequential applications of projections and interpolations, we believe that for the use-case outlined in this paper, the property is especially important.

The autoencoder structure automatically induces a reduced order model $\!M^{AE}$, corresponding to the equation \eqref{eq:reduced-order-model}, that captures the dynamics of the system in the  $r$-dimensional control space. The second property is that the $r$-dimensional ROM dynamics follows accurately the $n$-dimensional dynamics of the full order model.

\begin{remark}
    Note that unlike the POD-ROM whose linear structure induces a purely $r$-dimensional initial value problem, the NN-ROM \eqref{eq:reduced-order-model} still involves $n$-dimensional function evaluations. In a practical method it would be necessary to reduce the internal dimension of the ROM, however that is significantly outside the scope of this paper.
\end{remark}

We wish to construct a proof-of-concept neural network surrogate. We seek to ensure that the induced dynamics \eqref{eq:reduced-order-model} makes accurate predictions, by not only capturing the intrinsic manifold of the data, but also attempting to capture the inertial manifold of the system.  Explicitly, we wish to ensure that
\begin{equation}\label{eq:model-reconstruction}
   \norm{ \!M(X) - \phi(\!M^{AE}(\theta(X))) }
\end{equation}
is minimized. In this sense \eqref{eq:reduced-order-model} would represent an approximation of the dynamics along a submanifold of the inertial manifold.
In practice we replace \eqref{eq:model-reconstruction} by the difference between a short trajectory in the full space started from a certain initial value, and the corresponding short trajectory in the latent space started form the projected initial value. 

Combining the canonical autoencoder reconstruction error term~\eqref{eq:autoencoder-reconstruction-error}, the latent space reconstruction error term (preserving property~\eqref{eq:latent-reconstruction}), and the temporal reconstruction error~\eqref{eq:model-reconstruction} (computed along $K$ small steps), we arrive at the following loss function for each snapshot:
\begin{multline}
    \ell_j(X_j) = \frac{1}{n}\norm{X_j - \phi(\theta(X_j))}_2^2 + \frac{\lambda_1}{r}\norm{\theta(X_j) - \theta(\phi(\theta(X_j)))}_2^2\\ + \sum_{k=1}^K \frac{\lambda_2}{n}\norm{\!M_{t_j, (t_j + k\Delta t)}( X_j ) - \phi(\!M^{AE}_{t_j, (t_j + k\Delta t)}( \theta(X_j) ) ) }_2^2,
\end{multline}
where the hyper-parameters $\lambda_1$ and $\lambda_2$ represent the inverse relative variance of the mismatch.

The full loss function, combining the cost functions for all $T$ snapshots,
\begin{equation}
    L(X) = \sum_{j = 1}^T \ell_j(X_j),
\end{equation}
can be minimized through typical stochastic optimization methods.

\subsubsection{Lorenz '96 surrogate}
Similar to the POD model, we construct $r$-dimensional NN-based surrogate ROMs.
To this end, we use one hidden layer networks with the $\tanh$ activation function for the encoder \eqref{eq:projection} and decoder \eqref{eq:reconstruction}:
\begin{align}
    \theta(X) &= \begin{multlined}[t]\*W^\theta_2\tanh(\*W^\theta_1 X + \*b^\theta_1) + \*b^\theta_2,\\ \*W^\theta_1\in\mathbb{R}^{h\times n}, \*W^\theta_2\in\mathbb{R}^{r\times h}, \*b^\theta_1\in\mathbb{R}^{h}, \*b^\theta_2\in\mathbb{R}^{r},
        \end{multlined}\\
    \phi(U) &= \begin{multlined}[t]\*W^\phi_2\tanh(\*W^\phi_1 U + \*b^\phi_1) + \*b^\phi_2,\\ \*W^\phi_1\in\mathbb{R}^{h \times r}, \*W^\phi_2\in\mathbb{R}^{n\times h}, \*b^\phi_1\in\mathbb{R}^{h}, \*b^\phi_2\in\mathbb{R}^{n},
    \end{multlined}
\end{align}
where the hidden layer dimension on both the encoder and decoder is set to $h=200$ in order to approximate well most non-linear projection and interpolation functions.

The hyperparameter values used in the numerical experiments are $\lambda_1 = 10^3$, $\lambda_2 = 1$, and $K = 5$. We employ the ADAM \citep{kingma2014adam} optimization procedure to train the NN and produce the various ROMs needed. Gradients of the loss function are computed through automatic differentiation.

\section{Numerical experiments}
\label{sec:numerical-experiments}

The numerical experiments compare the following four methodologies : 
\begin{enumerate}[topsep=3pt,itemsep=0pt]
\item Standard EnKF with the Lorenz '96 full order model; 
\item MFEnKF with the POD surrogate model, an approach named  MFEnKF(POD); 
\item NL-MFEnKF with the autoencoder surrogate model, named NL-MFEnKF(NN); and 
\item MFEnKF with the autoencoder surrogate model, named MFEnKF(NN). 
\end{enumerate}
Since MFEnKF does not support non-linear projections and interpolations, in MFEnKF(NN) the ensembles are interpolated into the principal space, and assimilated under the assumption that $\*\Theta = \*\Phi = \*I$.

To measure the accuracy of the analysis mean with respect to the truth we use the spatio-temporal root mean square error (RMSE): 
\begin{equation}
    \text{RMSE} = \sqrt{\sum_{i=1}^N \sum_{k=1}^n \left(\left[\MeanE{\xa_i}\right]_k - \left[X^t_i\right]_k\right)^2},
\end{equation}
where $N$ is the number of data assimilation steps for which we wish to measure the error.

For sequential data assimilation experiments we observe all $40$ variables of the Lorenz '96 system, with an observation error covariance of $\Cov{\eta_i}{\eta_i} = \*I$. Observations are performed every $0.05$ time units corresponding to 6 hours in model time. We run $20$ independent realizations (independent ensemble initializations) for $1100$ time steps, but discard the first $100$ steps for spinup.

\subsection{Accuracy of ROM models}

\begin{table}[t]
    \centering
    \begin{tabular}{|c|c|c|}
        \hline
        $r$ & \textbf{POD-ROM} & \textbf{NN-ROM} \\ \hline
        $7$ & $0.52552$ & $0.56753$ \\
        $14$ & $0.70200$ & $0.76303$ \\
        $21$ & $0.82222$ & $0.88997$ \\
        $28$ & $0.90161$ & $0.95494$ \\
        $35$ & $0.96251$ & $0.98483$ \\ \hline
    \end{tabular}
    \caption{Relative kinetic energies preserved by the reconstructions of the POD-ROM and the NN-ROM solutions of the Lorenz '96 system. Various reduced-order model dimensions $r$ are considered.}
    \label{tab:ROM-kinetic-energy}
\end{table}

Our first experiment is concerned with the preservation of energy by our ROMs, and seeks to compare the accuracy of NN-ROM against that of POD-ROM.
For the Lorenz '96 model, we use the following equation \citep{karimi2010extensive} to model the spatio-temporal kinetic energy,
\begin{equation}
    \text{KE} = \sum_{i=1}^{T} \sum_{k=1}^{n} \left([\*y_i]_k\right)^2,
\end{equation}
where $T$ is the number of temporal snapshots that were used to construct the ROMs.
Table \ref{tab:ROM-kinetic-energy} shows the relative kinetic energies  of the POD-ROM and the NN-ROM reconstructed solutions \eqref{eq:interpolation} (the energies of the reconstructed ROM solutions are by the kinetic energy of the full order solution).

The results lead to several observations. First, the NN-ROM always preserves more energy than the POD-ROM. We have achieved our goal to build an NN-ROM that is more accurate than the POD-ROM. Second, the NN-ROMs with dimensions $r=21$ and $r=28$ preserve almost as much energy as the POD-ROMs with dimension $r=28$ and $r=35$, respectively. Intuitively this tells us that they should be just as accurate.

\subsection{Impact of ROM dimension}

\begin{figure}[t]
    \centering
    \includegraphics[width=0.75\linewidth]{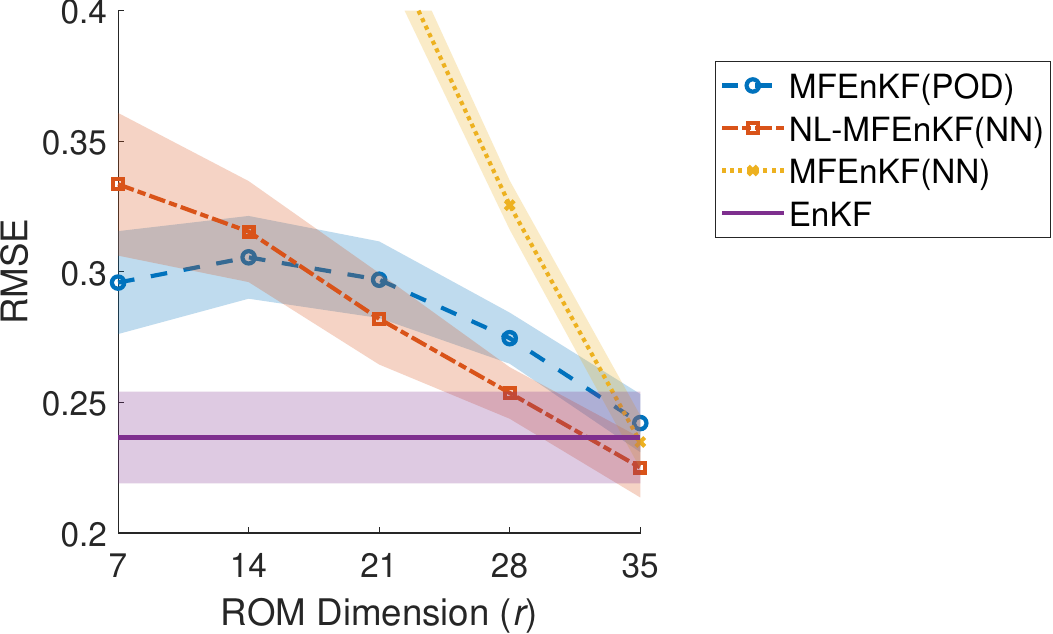}
    \caption{Analysis RMSE versus ROM dimension for three multifidelity data assimilation algorithms and the classical EnKF. Ensemble sizes are $N_X=32$ and $N_U = r-3$. Error bars show two standard deviations. The inflation factor for the surrogate ROMs is fixed at $\alpha_U=1.01$, the inflation of $\alpha = 1.07$ is used for the EnKF, and $\alpha_X=1.05$ is used for all other algorithms.}
    \label{fig:rom-dimension-experiment}
\end{figure}

The second set of experiments seeks to learn how the ROM dimension affects the accuracy of the various multifidelity data assimilation algorithms.

We take the principal ensemble size to be $N_X = 32$, and the surrogate ensemble sizes equal to $N_U = r - 3$, in order to always work in the undersampled regime. All multifidelity algorithms are run with inflation factors $\alpha_X = 1.05$ and $\alpha_U = 1.01$. The traditional EnKF using the full order model is run with an ensemble size of $N=N_X$ and an inflation factor $\alpha = 1.06$ to ensure stability.

The results are shown in Figure \ref{fig:rom-dimension-experiment}. For the `interesting' dimensions $r=28$, $r=35$, and an underrepresented $r=21$, the NL-MFEnKF(NN) performs significantly better than the MFEnKF(POD).
For a severely underrepresented ROM dimension of $r=7$ and $r=14$ the MFEnKF(POD) outperforms or ties the NL-MFEnKF(NN). The authors suspect that this is either due to the NN-ROMs sharing the same hyperparameters, meaning that if the hyperparameters were tailored to each $r$ dimensional NN-ROM individually, the authors suspect that it would beat the POD-ROM in all cases. 

Of note is that excluding the case of $r=35$, the MFEnKF(NN), using the standard MFEnKF method in the principal space, is the least accurate among all algorithms, indicating that the non-linear method presented in this paper is truly needed for models involving non-linear model reduction.

We note that for $r=35$, the suspected intrinsic dimension of the data, the NL-MFEnKF(NN) outperforms the EnKF, both in terms of RMSE and variability within runs. This is additionally strengthened by the results of the MFEnKF(NN) assimilated in the principal space, as it implies that there is little-to-no loss of information in the projection into the control space. 

\subsection{Ensemble Size and Inflation} 

\begin{figure}[t]
    \centering
    \includegraphics[width=0.98\linewidth]{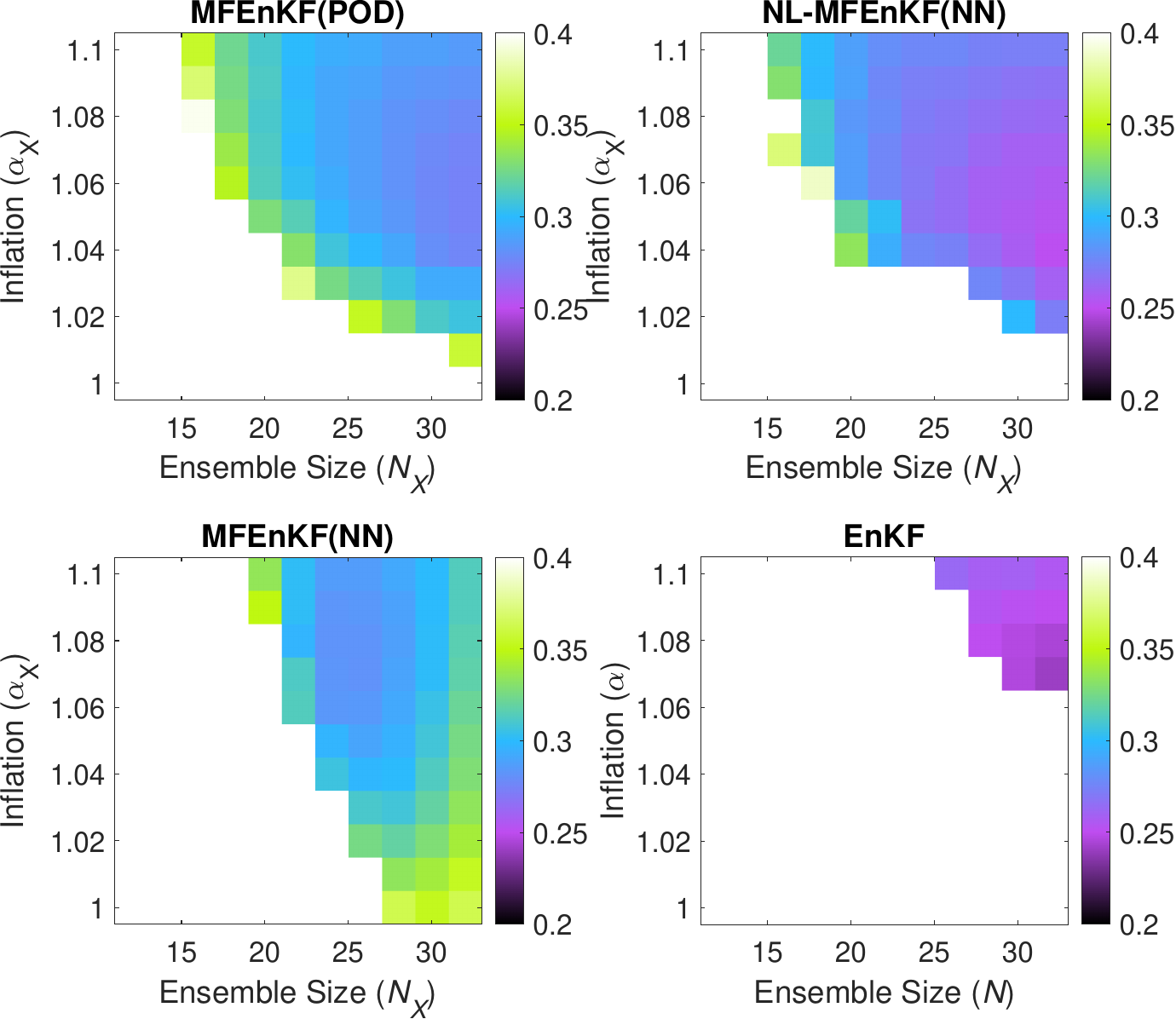}
    \caption{Analysis RMSE for four data assimilation algorithms and various values of the principal ensemble size $N_X$ and inflation factor $\alpha_X$. The surrogate ROM size is fixed to $r=28$ with ensemble size of $N_U=25$ and inflation of $\alpha_U=1.01$. The top left panel represents the MFEnKF with a POD-ROM surrogate, the top right panel represents the NL-MFEnKF with a NN-ROM, the bottom left panel represents the MFEnKF with a NN-ROM surrogate assimilated in the principal space, and the bottom right panel represents the classical EnKF.}
    \label{fig:full-experiment}
\end{figure}

Our last set of experiments focuses on the particular ROM dimension $r=28$, as we believe that it is representative of an operationally viable dimension reduction,  covering the dimensionality of the global attractor.

For each of the four algorithms we vary the principal ensemble size $N_X = N$, and the principal inflation factor $\alpha_X = \alpha$. As before, we set the control ensemble size to $N_U = r-3 = 25$ and the control-space inflation factor to $\alpha_U = 1.01$.

Figure \ref{fig:full-experiment} shows the spatio-temporal RMSE for various choices of ensemble sizes and inflation factors. The results show compelling evidence that NL-MFEnKF(NN) is competitive when compared to MFEnKF(POD); the two methods have similar stability properties for a wide range of principal ensemble size $N_X$ and principal inflation $\alpha_X$, but NL-MFEnKF(NN) yields smaller analysis errors for almost all scenarios for which the two methods are stable.

For a few points with low values of principal inflation $\alpha_X$, the NL-MFEnK(NN) is not as stable as the MFEnKF(POD). This could be due to either an instability in the NN-ROM itself,  in the NL-MFEnKF itself, or in the projection and interpolation operators.

An interesting observation is that the MFEnKF(NN), which is assimilated naively in the principal space, becomes less stable for larger ensemble sizes $N_X$. One possible explanation for this is that the ensemble mean estimates become more accurate, thus the bias between the ancillary and control variates is amplified in \eqref{eq:control-variate}, and more error is introduced from the surrogate model. This is in contrast to most other ensemble based methods, including all others in this paper, whose error is lowered by increasing ensemble size.

\section{Conclusions}
\label{sec:conclusions}

This work extends the multifidelity ensemble Kalman filter framework to work with nonlinear surrogate models. The control variate framework is generalized to incorporate optimal non-linear projection and interpolation operators implemented using autoencoders.

The results obtained in this paper indicate that reduced order models based on non-linear projections that fully capture the intrinsic dimension of the data provide excellent surrogates for use in multifidelity sequential data assimilation. Moreover, nonlinear generalizations of the control variate framework result in small approximation errors, and thus the assimilation can be carried out efficiently in the space of a nonlinear reduced model.

Future directions to be pursued include more robust constructions of projection and interpolation operators, the use of recurrent neural network models, and applications to operational test problems.

\acknowledgements

This work was supported by awards NSF ACI--1709727, NSF CDS\&E-MSS--1953113, DOE ASCR DE--SC0021313,
and by the Computational Science Laboratory at Virginia Tech.

\bibliographystyle{Bibliography_Style}
\bibliography{Bib/biblio,Bib/traian,Bib/sandu,Bib/data_assim_multilevel,Bib/data_assim_kalman}

\end{document}